\def\doctype{}
\newcommand\cX{\mathfrak{X}}
\newcommand\cF{\mathcal{F}}
\newcommand\vj{\mathbf{1}}
\newcommand\vb{\mathbf{b}}
\newcommand\ve{\mathbf{e}}
\newcommand\vx{\mathbf{x}}
\newcommand\vo{\mathbf{0}}
\newcommand\Q{\mathbb{Q}}
\newcommand\decomp{\trianglelefteq_\Q}
\newcommand\Sym{\mathcal{S}}
\newcommand\lam{\lambda}
\newcommand{\comment}[1]{}
\numberwithin{equation}{section}
\let\oldsection\section
\newcommand\boldsection[1]{\oldsection{\bf #1}}
\newcommand\starsection[1]{\oldsection*{\bf #1}}
\renewcommand\section{\@ifstar\starsection\boldsection}
\newtheoremstyle{theorem}
  {12pt}		  
  {0pt}  
  {\sl}  
  {\parindent}     
  {\bf}  
  {. }    
  { }    
  {}     
\theoremstyle{theorem}
\newtheorem{thm}{Theorem}[section]  
\newtheorem{lemma}[thm]{Lemma}     
\newtheorem{cor}[thm]{Corollary}
\newtheorem{prop}[thm]{Proposition}
\newtheoremstyle{definition}
  {12pt}		  
  {0pt}  
  {}  
  {\parindent}     
  {\bf}  
  {. }    
  { }    
  {}     
\theoremstyle{definition}
\newtheorem{ex}[thm]{Example}
\newcommand\rk{{\sc Remark.} }
\newcommand\rks{{\sc Remarks.} }
\renewcommand{\proofname}{Proof}
\renewenvironment{proof}[1][\proofname]{\par
  \pushQED{\qed}%
  \normalfont \partopsep=\z@skip \topsep=\z@skip
  \trivlist
  \item[\hskip\labelsep
        \scshape
    #1\@addpunct{.}]\ignorespaces
}{%
  \popQED\endtrivlist\@endpefalse
}
\renewcommand*\@maketitle{%
  \normalfont\normalsize
  \@adminfootnotes
  \@mkboth{\@nx\shortauthors}{\@nx\shorttitle}%
  \global\topskip42\p@\relax 
  \@settitle
  \ifx\@empty\authors \else {\vskip 1em
\vtop{\centering\shortauthors\@@par}} \fi
  \ifx\@empty\@date \else {\vskip 1em \vtop{\centering\@date\@@par}}\fi 
  \ifx\@empty\@dedicatory
  \else
    \baselineskip18\p@
    \vtop{\centering{\footnotesize\itshape\@dedicatory\@@par}%
      \global\dimen@i\prevdepth}\prevdepth\dimen@i
  \fi
  \@setabstract
  \normalsize
  \if@titlepage
    \newpage
  \else
    \dimen@34\p@ \advance\dimen@-\baselineskip
    \vskip\dimen@\relax
  \fi
} 
\renewcommand*\@adminfootnotes{%
  \let\@makefnmark\relax  \let\@thefnmark\relax
  \ifx\@empty\@subjclass\else \@footnotetext{\@setsubjclass}\fi
  \ifx\@empty\@keywords\else \@footnotetext{\@setkeywords}\fi
  \ifx\@empty\thankses\else \@footnotetext{%
    \def\par{\let\par\@par}\@setthanks}%
  \fi
\thispagestyle{titlepage}
}
\begin{document}

\title{\large Fractional edge-decompositions of dense graphs and related eigenvalues}

\author{Peter J.~Dukes}
\address{
Mathematics and Statistics,
University of Victoria, Victoria, Canada
}
\email{dukes@uvic.ca}

\thanks{Research of the first author is supported by NSERC grant number 312595--2010}

\date{\today}

\begin{abstract}
We consider the problem of decomposing some $t$-uniform hypergraph $G$ into copies of another, say $H$, with nonnegative rational weights. For fixed $H$ on $k$ vertices, we show that this is always possible for all $G$ having sufficiently many vertices and `local density' at least $1-C(t)k^{-2t}$.  In the case $t=2$ and $H=K_3$, we show that all large graphs with density at least $27/28$ admit a fractional triangle decomposition.
The proof relies on estimates of certain eigenvalues in the Johnson scheme.  
\end{abstract}

\maketitle
\hrule



\setcounter{section}{-1}
\section{Status}

We begin with an important clarification.  The author regrets an error in his original paper \cite{ratdec} on this topic.  The main result, Theorem~\ref{main}, stays valid because of its conservative statement.  However, an incorrect step in its proof led to wrong constants for thresholds in the ensuing discussion.  For posterity, most of \cite{ratdec} is mirrored in Sections 1 to 5 which follow, with some housekeeping applied.  A corrigendum fixing the incorrect step appears in Section 6 and has been submitted to the journal.  

A future version of this may integrate the corrections into the original article for coherence.

\section{Introduction}

Let $t$ be a positive integer.  The set of all $t$-element subsets of a
set $X$ is written $\binom{X}{t}$. By a (rational) $t$-{\em vector on}
$X$, we mean a function $f \in \Q^{\binom{X}{t}}$. 

A $t$-{\em uniform hypergraph}, or simply $t$-{\em graph}, is a triple $H=(X,E,\iota)$, 
where 
\begin{itemize}
\item
$X$ is a set of {\em points} or {\em vertices}, 
\item
$E$ is a set of {\em edges}, and 
\item
$\iota \subset X \times E$ is an {\em incidence} relation such that every edge is incident
with precisely $t$ different vertices.
\end{itemize}
Edges are usually identified with the set of incident vertices, dispensing with $\iota$.  However, the definition above permits `multiple edges'.  If there are no multiple edges, then $H$ is said to be {\em simple}.
Unless otherwise specified, all $t$-graphs are assumed simple, and $E \subseteq \binom{X}{t}$.  With this understanding, we may conveniently identify $t$-graphs with $(0,1)$ $t$-vectors.

A $t$-graph $H'$ with vertex set $X'$ and edge set $E'$ is a {\em subgraph} of $H$ if $X' \subseteq X$ and $E' \subseteq E$.  The corresponding $t$-vectors satisfy $f' \le f|_{\binom{X'}{t}}$.

Ordinary graphs are $2$-graphs; note however that the definition does not allow `loops'.

Consider a large $t$-graph $G$ on vertex set $V$, $|V|=v$.  For $0 \le s \le t$, the {\em degree} in $G$ of an $s$-subset $S$ of vertices is the number of edges of $G$ which contain $S$.  The minimum degree over all $s$-subsets is denoted $\delta_s(H)$.  Degrees of $(t-1)$-subsets are normally called \emph{codegrees}.  

We say that $G$ is $(1-\epsilon)$-{\em dense} if $\delta_{t-1}(G) \ge (1-\epsilon)(v-t+1)$.  In other words, a $t$-graph is $(1-\epsilon)$-dense if, given any $t-1$ points, the probability that another point fails to induce an edge is at most $\epsilon$.

The {\em complete} $t$-graph or {\em clique} on $V$ has edges $\binom{V}{t}$ and is equivalent to the constant $t$-vector with every coordinate equal to $1$.  The standard graph-theoretic notation is $K^{t}_v$, where the superscript is normally omitted if $t=2$, or if it is otherwise understood.  Of course, complete $t$-graphs are 1-dense.

Suppose $G$ and $H$ are $t$-graphs, as above, with respective vertex sets $V$ and $X$.  A {\em fractional} or {\em rational decomposition} of $G$ into copies of $H$ is a set of pairs $(H_i,w_i)$, where
\vspace{-11pt}
\begin{itemize}
\item
each $H_i$ is a subgraph of $G$ isomorphic to $H$;
\item
$w_i$ are positive weights such that, for every edge $T$ of $G$,
\begin{equation}
\label{dec-system}
\sum_{i: T \in H_i} w_i = 1.
\end{equation}
\end{itemize}
To be clear, $T \in H_i$ means that $T$ is an edge of $H_i$.

Although the existence questions for fractional decompositions are interesting in their own right, there are actually some nice applications where fractional weights are allowed -- even desired -- such as in statistics (balanced sampling plans) and electrical engineering (network scheduling).

Since (\ref{dec-system}) leads to a linear system with integral coefficients, there is no loss in generality in assuming $w_i \in \Q$.
Note that if the $w_i$ are integers (0 or 1), the result is an ordinary edge-decomposition.  Although we do not need the notation very frequently, a reasonable abbreviation is $H \decomp G$ for rational decomposition and $H \trianglelefteq G$ for ordinary decomposition.

Alternative descriptions are possible.  For instance, if $H$ has vertex set $X$, a fractional decomposition of $G$ into copies of $H$ can be viewed as a nonnegative formal linear combination of injections $X$ into $V$, say $\sigma \in \Q_{\ge 0}[X \hookrightarrow V]$, so that $\sigma H = G$.

A (signed) linear combination of injections $\sigma \in \Q[X \hookrightarrow V]$ is not enough, as the following example shows.

\begin{ex}
Here $t=2$. 
Let $G=C_5$ be the 5-cycle $12345$ on $V=\{1,2,3,4,5\}$, and let $H=K_3$ on a three element set $X$.
Then combining `positive' copies of $H$ on $123,145$ plus a `negative' 
copy of $H$ on $134$ yields a 2-vector $G'$ with pairs $\{1,2\}$, 
$\{2,3\}$, $\{4,5\}$, $\{1,5\}$ having weight 1, pair $\{3,4\}$ having 
weight $-1$, and all other pairs having weight 0.
So the five cyclic shifts of $G'$ combine to yield $3G$ (the 5-cycle with 
every edge tripled).  Therefore, there exists $\sigma \in \Q[X \hookrightarrow V]$ with $\sigma H = 
G$.  However, since $H$ is not a subgraph of $G$, it is clear that there is no such $\sigma  
\in \Q_{\ge 0}[X \hookrightarrow V]$. 
\end{ex}

For ordinary graphs $G$ and $H$, another equivalent formulation arises from the adjacency matrices  $A_G$ and $A_H$.  It is easy to see that $H \decomp G$ (respectively $H \trianglelefteq G$) is equivalent to a decomposition $$A_G = \sum w_i Q_i^\top A_H Q_i,$$
where $Q_i$ are $|X| \times |V|$ $(0,1)$ `injection' matrices having row sum $1$, and $w_i$ are positive rationals (integers).

The following facts are evident from the definitions.

\begin{lemma}
\label{evident}
\begin{enumerate}
\renewcommand{\theenumi}{\alph{enumi}}
\item
Both $\decomp$ and $\trianglelefteq$ are transitive on $t$-graphs.
\item
If $H$ is a $t$-graph with $p \le v$ vertices and $q >0$ edges, then 
$H \decomp K_v^t$.
\end{enumerate}
\end{lemma}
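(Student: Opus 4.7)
For part (a), the plan is straightforward composition. Given $H \decomp G$ witnessed by $G = \sum_i w_i H_i$ (with each $H_i \subseteq G$ and $H_i \cong H$) and $G \decomp F$ witnessed by $F = \sum_j u_j G_j$ (with each $G_j \subseteq F$ and $G_j \cong G$), I would pull the $H$-decomposition back through each isomorphism $\phi_j : G \to G_j$ to get $G_j = \sum_i w_i\,\phi_j(H_i)$, where each $\phi_j(H_i)$ is a copy of $H$ sitting inside $G_j \subseteq F$. Substituting yields $F = \sum_{i,j} u_j w_i\,\phi_j(H_i)$, a nonnegative rational combination of copies of $H$ in $F$. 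To verify the edge-sum condition, fix an edge $T$ of $F$ and observe that $T \in \phi_j(H_i)$ iff $T \in G_j$ and $\phi_j^{-1}(T) \in H_i$; summing over $i$ first gives $1$ (inner decomposition at the edge $\phi_j^{-1}(T)$ of $G$), and then summing $u_j$ over $j$ with $T \in G_j$ gives $1$ (outer decomposition). The same proof applies to $\trianglelefteq$, since products of positive integers are positive integers.

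For part (b), the plan is an averaging argument over the full symmetric group. I would consider the collection of all $v!/(v-p)!$ injections $\sigma : X \hookrightarrow V$ and weight each resulting subgraph $\sigma H$ equally. Since $\mathcal{S}_V$ acts transitively on $\binom{V}{t}$, every edge $T \in \binom{V}{t}$ lies in the same number $M$ of these images $\sigma H$. A double count of pairs (injection, edge of its image) gives $M \cdot \binom{v}{t} = q \cdot v!/(v-p)!$, and since $q > 0$ and $p \le v$ the right-hand side is positive, so $M > 0$. Assigning weight $1/M$ to each $\sigma H$ then produces a valid rational decomposition of $K_v^t$ into copies of $H$.

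The lemma is labelled ``evident'', and accordingly I expect no real obstacle. The only nontrivial care is the edge-indicator bookkeeping in (a), where one must verify that transporting a decomposition across an isomorphism of host graphs respects both edge incidences and weight products; (b) is a pure symmetry/averaging argument, where positivity of $M$ is the only thing to check.
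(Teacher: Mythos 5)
Your proposal is correct and matches the paper's approach: the paper dismisses (a) as clear, and for (b) its remark takes every labeled copy of $H$ with weight $\binom{v}{t}\big/\bigl(q\,p!\binom{v}{p}\bigr)$, which is exactly your $1/M$ since $v!/(v-p)! = p!\binom{v}{p}$. No discrepancies to report.
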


\rk
Part (a) is quite clear.  For (b), it is enough to take each labeled subgraph of $H$ in the complete graph with weight $\binom{v}{t}/q p! \binom{v}{p}$.

Obviously, for $H \decomp G$, it is necessary that $H$ be a subgraph of $G$. In fact, every $t-1$ elements of $G$ must belong to enough copies of $H$ to exhaust the degree at that vertex.    
For instance, large balanced complete bipartite $2$-graphs $G$ are nearly $\frac12$-dense but triangle-free.  Edges can be thrown in until $G$ becomes nearly $\frac34$-dense and still admit no decomposition into copies of $K_3$.  Actually, not much more is known about the density of $G$ failing to admit a decomposition apart from this kind of counting analysis.  The weak (full) {\em Nash-Williams conjecture} states that $K_3 \decomp G$ (resp. $K_3 \trianglelefteq G$) provided that $G$ is at least $\frac{3}{4}$-dense (and, both locally and globally, $K_3$-divisible).

In this paper, we prove the following existence result on fractional decompositions of dense hypergraphs.

\begin{thm}
\label{main}
For integers $k \ge t \ge 2$, there exists $v_0(t,k)$ and $C=C(t)$ such that,
for $v > v_0$ and $\epsilon < Ck^{-2t}$, any $(1-\epsilon)$-dense $t$-graph $G$ on $v$ vertices
admits a fractional decomposition into copies of $K_k$.
\end{thm}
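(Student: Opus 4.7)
The plan is to convert the existence of a rational decomposition into the solvability of a linear system, and then to solve the system by perturbing the uniform decomposition of $K_v^t$ provided by Lemma \ref{evident}(b). Let $Y$ denote the collection of $k$-subsets of $V$ that induce a $K_k^t$-clique in $G$, let $E = E(G)$, and let $W$ be the $E \times Y$ $(0,1)$ inclusion matrix with $W[T,S] = 1$ iff $T \subseteq S$. A rational decomposition of $G$ into copies of $K_k^t$ is then precisely a vector $w \in \Q_{\ge 0}^Y$ satisfying $Ww = \vj_E$. For the complete graph the uniform $w_0 \equiv 1/\binom{v-t}{k-t}$ works. In general, set $M := WW^\top$: an $E \times E$ symmetric nonnegative integer matrix whose $(T_1,T_2)$-entry counts $K_k^t$-cliques of $G$ through $T_1 \cup T_2$. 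If $M$ is invertible, then $w := W^\top M^{-1}\vj_E$ automatically satisfies $Ww = \vj_E$ and is rational.

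The first main step is to control the spectrum of $M$ by comparison with the corresponding matrix $M^*$ built from $K_v^t$. Because its entries depend only on $|T_1 \cap T_2|$, $M^*$ lies in the Johnson scheme $J(v,t)$; its eigenspaces are the standard Johnson harmonics $V_0, V_1, \ldots, V_t$, and its eigenvalues $\lambda_0^*, \ldots, \lambda_t^*$ are explicitly computable, with $\lambda_0^* = \binom{k}{t}\binom{v-t}{k-t}$ (eigenvector $\vj$) the largest and $\lambda_t^*$ the smallest, their ratio of order $k^{2t}$ up to a factor depending only on $t$. Going from $M^*$ to $M$ deletes rows and columns indexed by non-edges of $G$ and replaces some surviving entries by smaller ones; a standard double count using the $(1-\epsilon)$-dense hypothesis and the minimum codegree $\delta_{t-1}$ bounds the operator-norm change by a $t$-dependent constant times $\epsilon \lambda_0^*$. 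When $\epsilon < C(t)k^{-2t}$ this perturbation is strictly less than $\lambda_t^*$, so Weyl's inequality guarantees that $M$ is invertible and each of its eigenvalues stays comparable to the corresponding $\lambda_j^*$.

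The remaining, and most delicate, step is nonnegativity of $w$. Observe that if $M^{-1}\vj_E$ is entrywise nonnegative, then each sum $w(S) = \sum_{T \subseteq S,\, T \in E}(M^{-1}\vj_E)(T)$ is also nonnegative, so it suffices to show that $M^{-1}\vj_E$ stays close, pointwise, to the positive constant $(\lambda_0^*)^{-1}\vj_E$ which is its value for $K_v^t$. Writing $M^{-1}\vj_E = (\lambda_0^*)^{-1}\vj_E + r$, one has $Mr = \vj_E - (\lambda_0^*)^{-1}M\vj_E$, and the right-hand side is uniformly small because $(1-\epsilon)$-density makes the number of $K_k^t$-cliques of $G$ through any edge close to $\binom{v-t}{k-t}$. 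Projecting this small residual onto the restrictions of the Johnson eigenspaces to $E$ and invoking the spectral bounds of the previous step then produces the required pointwise control on $r$. The main obstacle, and the one that dictates the precise $k^{-2t}$ threshold, is upgrading the $\ell_2$ spectral gap to this $\ell_\infty$-type pointwise bound; the fine structure of the Johnson scheme is precisely what the paper's announced eigenvalue estimates are designed to deliver.
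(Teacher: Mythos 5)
Your setup coincides with the paper's: your $M = WW^\top$ is exactly the matrix $\widehat{M}(T,U)=\cX|_G[U](T)$ of Section 2, your $M^*$ is the Johnson-scheme matrix $M=\sum_i \xi_i A_i$, and your perturbation-plus-Weyl step for invertibility is the paper's Lemma~\ref{mtilde}, Proposition~\ref{inf-norm-est} and Proposition~\ref{eigsM}. (One quantitative slip there: the ratio of the extreme Johnson eigenvalues is $\theta_0/\theta_t \approx \binom{k}{t}\sim k^t$, not $k^{2t}$, and the perturbation is of size $\epsilon\binom{k}{t}^2\binom{v-t}{k-t}=\epsilon\binom{k}{t}\theta_0$, not a $t$-dependent constant times $\epsilon\theta_0$; it is this $\binom{k}{t}^2$ that produces the $k^{-2t}$ threshold.)

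The genuine gap is the final step, entrywise nonnegativity of $\widehat{M}^{-1}\vj$, which you correctly identify as the crux but do not prove. Your sketch --- write $\widehat{M}^{-1}\vj = \theta_0^{-1}\vj + r$, note $\widehat{M}r$ is small in $\ell_\infty$, and ``project onto the restrictions of the Johnson eigenspaces to $E$'' --- does not go through as stated: the restricted harmonics are no longer eigenvectors of $\widehat{M}$, and the only bound the spectral gap gives is $\|r\|_\infty \le \|r\|_2 \le \|\widehat{M}r\|_2/\lambda_{\min}(\widehat{M}) \le \sqrt{|E|}\,\|\widehat{M}r\|_\infty/\lambda_{\min}$, which loses a factor of order $v^{t/2}$ and is nowhere near the needed bound $\|r\|_\infty < \theta_0^{-1}\sim v^{-(k-t)}$. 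Upgrading $\ell_2$ control to pointwise control is exactly what you cannot wave away, and the paper does not do it either; instead it sidesteps the issue entirely via Cramer's rule: $x_T = \det(\widehat{M}_T)/\det(\widehat{M})$, where $\widehat{M}_T$ has column $T$ replaced by $\vj$. Positivity of the denominator follows from Lemma~\ref{pd} applied to $M|_G$; positivity of the numerator requires a separate eigenvalue lower bound for the column-replaced matrix $M_1 = MB$, and the entire Section 5 (the rank-one update $V^\top BV$, the computation of $\chi_R$, and the sign-change/interlacing argument of Table~\ref{signs} showing every eigenvalue of $M_1$ exceeds $\theta_t/2$) exists precisely to supply this. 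Your proposal has no counterpart to that analysis, so the nonnegativity claim --- and hence the theorem --- is not established by your argument.
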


By Lemma~\ref{evident}, the same result holds for any $t$-graph $H$ on $k$ vertices replacing $K_k$.

In \cite{Yuster2}, Yuster proved the same result for $\epsilon \lessapprox 6^{-kt}$, although it was admitted that small improvements may be possible.  Probabilistic and combinatorial arguments were central.  A better result was obtained for ordinary graphs, proved in \cite{Yuster1} for $\epsilon \le 1/9k^{10}$.  

Here, the improvement from Theorem~\ref{main} is substantial, with a qualitative weakening on the density requirement for general $t$, and a bound much closer to the density condition for ordinary graphs.  Our new upper bound on $\epsilon$ is actually $4^{-t-1} \binom{k}{t}^{-2}$, and small improvements may 
\marginpar{\color[rgb]{1,0,0} Important \\ changes\\ from \cite{ratdec}}
be possible from the present proof technique.  For comparison, our result with $k=3$ and $t=2$ shows that graphs $G$ which are at least $\frac{27}{28}$-dense admit a fractional triangle decomposition.  This is getting much closer to the Nash-Williams bound, though substantial work still remains, even in this basic case.

Our proof of Theorem~\ref{main} is constructive and very na\"ive, at least in principle.  For each edge in $G$, consider the `fan' of all $k$-subsets which cover it and induce a clique $K_k^t$ in $G$.  We actually prove that $G$ is a nonnegative rational combination of its fans.  This is clear for complete hypergraphs $K_v^t$, and so we analyze the small perturbation of the resulting linear system obtained by restricting from $K_v^t$ to $G$.  The outline of the argument is presented in more detail in Sections 2 and 3.  The technicalities amount to estimating certain eigenvalues and norms using the theory of association schemes.  These details are covered in Sections 4, 5 and 6. 

\section{Coverage and linear systems}

Let $V$ be a $v$-set, and suppose that $k \ge t$.  A set system $\cF \subseteq \binom{V}{k}$  is said to {\em cover} $T \in \binom{V}{t}$ exactly $\lam$ times if $T \subset K$ for exactly $\lam$ elements $K \in \cF$.  Alternatively, $\cF$ is a $k$-vector and its {\em coverage} is a $t$-vector $\cF^t$ with
$$\cF^t(T) = \sum_{K \supset T} \cF(K).$$
In context, we may suppress the superscript $t$, and instead write $\cF(T)$ for the coverage of $T$ by $\cF$.

Now, let $\cX=\binom{V}{k}$, fix $U \in \binom{V}{t}$, and consider the family $\cF=\cX[U]$ of all $\binom{v-t}{k-t}$ $k$-subsets of $V$ which contain $U$.  
Then 
$$\cX[U](T) = \binom{v-|T \cup U|}{k-|T \cup U|},$$
since this counts the number of $k$-subsets containing both $T$ and $U$.
Therefore, we may write
$$\cX[U](T) = \xi_{|T \setminus U|},$$
where
$$\xi_i = \binom{v-t-i}{k-t-i} = \frac{v^{k-t-i}}{(k-t-i)!}+o(v^{k-t-i}).$$ 
for $i=0,1,\dots,t$.
This kind of estimation on the orders 
of binomial coefficients occurs frequently in what follows.

Let $n=\binom{v}{t}$ and identify $\Q^n$ with $\Q^{\binom{V}{t}}$. 
Define the $n \times n$ 
matrix $M$ by 
$$M(T,U) = \xi_{|T \setminus U|} = \cX[U](T),$$
for $T,U \in \binom{V}{t}$.  
In fact, $M$ factors as $M=WW^\top$, where $W$ is the well-known inclusion matrix of $t$-subsets versus $k$-subsets.  However, we do not (at least explicitly) use $W$ in what follows.  

Note that $M^\top = M$ and the constant column (row) sum of $M$ is 
\begin{eqnarray}
\label{sumwts} 
\sum_{T} \xi_{|T \setminus U|} = \sum_{i=0}^t \xi_i \binom{v-t}{i} \binom{t}{i} &=&  \binom{k}{t} \binom{v-t}{k-t} \\
\nonumber
&=& \binom{k}{t} \binom{v}{k-t} + o(v^{k-t}). 
\end{eqnarray} 
Observe that (\ref{sumwts}) counts the number of $k$-subsets intersecting
a given $k$-subset in exactly $t$ points, times the number of choices of a second $t$-subset inside of it.

Although we do not make explicit use of the abundant additional symmetry in $M$, it is worth noting that the symmetric group $\mathcal{S}_V$ induces an action on $\cX$ which stabilizes $M$.

At this point, we note that a nonnegative solution $\vx$ to $M \vx = \vj$ induces a rational decomposition $K_k^t \decomp K_v^{t}$.  Simply take each $\cX[U]$ with weight $\vx(U)$, and the total coverage is 
$$\sum_U \vx(U) M(T,U) = (M \vx)(T) = 1$$
on each $t$-set $T$.  Indeed, $\vj$ is an eigenvector of $M$, and so the unique such $\vx$ simply has the reciprocal of (\ref{sumwts}) in each coordinate.

Decomposing a non-complete $t$-graph $G$ is not so easy.  We must restrict our attention to $k$-subsets that cover only those edges present in $G$.

To this end, define $\cX |_G$ as the family of all $k$-subsets which induce a clique in $G$.  In other words, $K \in \cX |_G$ if and only if 
\begin{itemize}
\item
$K \subseteq V$ with $|K|=k$, and 
\item
$T \subset K$ with $|T|=t$ implies $T$ is an edge of $G$.
\end{itemize}
Note that  $\cX |_G$ is nonempty when $G$ is sufficiently dense.  

Now consider $\cX |_G[U]$, the family of all $k$-subsets on $V$ which contain $U$ and also induce a clique in $G$.  Define the $|G| \times |G|$ matrix $\widehat{M}$, with rows and columns indexed by edges of $G$, by 
$$\widehat{M}(T,U) = \cX |_G[U](T).$$

Again, $\widehat{M}$ is symmetric, since its $(T,U)$-entry just counts the number of $k$-subsets containing $T,U$, and no non-edges of $G$.  And, most importantly, a nonnegative solution $\vx$ to 
\begin{equation}
\label{main-system}
\widehat{M} \vx = \vj,
\end{equation} 
if it exists, yields a rational decomposition of $K_k^t \decomp G$.  Just as in the easy case of complete $t$-graphs above, each $\cX |_G[U]$ is taken with multiplicity $\vx(U)$ to obtain coverage $1$ on edges $T$ of $G$.  By construction, the coverage is also zero on non-edges of $G$.

The basic theme of this article may be summarized as follows: for dense $G$, our matrix $\widehat{M}$ is a small perturbation of the principal submatrix $M |_G$ of $M$, restricted to edges of $G$.  This perturbation will be estimated carefully in the next section; however, the relevant lemma in terms of coverages is given here.

\begin{lemma}
\label{mtilde}
Suppose $G$ is a $(1-\epsilon)$-dense simple $t$-graph. 
\vspace{-11pt}
\begin{enumerate}
\renewcommand{\theenumi}{\alph{enumi}}
\item
Given an edge $T$ and $i$ with $0 \le i \le t$, there are at least 
$$\binom{t}{i} \binom{v}{i} \left[ 1- \binom{t+i}{i} \epsilon +o(1)\right]$$ edges $U$ such that $|T \setminus U|=i$ and $T \cup U$ induces a clique in $G$.
\item
If $T$ and $U$ are edges of $G$ with $|T \setminus U|=i$ and such that $T \cup U$ induces a clique in $G$, then there are at least
$$\binom{v-t-i}{k-t-i} \left[ 1- \left( \binom{k}{t} - \binom{t+i}{i} \right) \epsilon  +o(1)\right]$$
$k$-subsets containing $T \cup U$ and inducing a clique in $G$.
\end{enumerate}
\end{lemma}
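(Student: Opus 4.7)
My approach for both parts is a union bound over potentially ``bad'' $t$-subsets that could fail to be edges, combined with the hypothesis that every $(t-1)$-subset has at most $\epsilon(v-t+1)$ non-edges through it.

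For part (a), I would first reparameterize: any $U$ with $|T \setminus U| = i$ is determined by a pair $(U \cap T, B)$, where $U \cap T$ is a $(t-i)$-subset of $T$ and $B = U \setminus T$ is an $i$-subset of $V \setminus T$. Since $T \cup U = T \cup B$, the clique condition depends only on $B$, and whenever $T \cup B$ induces a clique then $U$ is automatically an edge (being a $t$-subset of that clique). Thus the count of good $U$'s equals $\binom{t}{i}$ times the number of $B$'s for which $T \cup B$ is a clique, out of a total of $\binom{v-t}{i} = \binom{v}{i}(1+o(1))$. To bound the ``bad'' $B$'s, I would union bound over non-edges $S \neq T$ contained in $T \cup B$. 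Parameterizing by $s = |S \cap T|$ with $\max(0,t-i) \le s \le t-1$, the density condition gives (after correcting for the $(t-s)$ choices of pivot vertex) at most roughly $\binom{t}{s}\epsilon v^{t-s}/(t-s)!$ non-edges $S$ with this value of $s$, and each such $S$ is contained in about $v^{i-t+s}/(i-t+s)!$ subsets $B$. Substituting $j = t-s$ and factoring out $\epsilon v^i/i!$, the resulting sum becomes $\sum_{j=1}^{i}\binom{t}{j}\binom{i}{j}$, which by Vandermonde's identity equals $\binom{t+i}{i}-1$. The extra $-1$ is absorbed into $o(1)$, yielding the stated lower bound.

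For part (b), the same strategy applies to $k$-supersets $K$ of $T \cup U$, whose total number is $\binom{v-t-i}{k-t-i}$. A union bound over non-edges $S \subset K$ with $S \not\subset T \cup U$, parameterized by $j = |S \setminus (T \cup U)|$ for $1 \le j \le t$, bounds the bad $K$'s. For each such $j$, the density hypothesis yields at most about $\binom{t+i}{t-j}\epsilon v^j / j!$ non-edge candidates $S$ (via $(t-1)$-subsets meeting $T\cup U$ in $t-j$ elements and $V \setminus (T\cup U)$ in $j-1$ elements), and each such $S$ extends to about $v^{k-t-i-j}/(k-t-i-j)!$ values of $K$. Factoring out $\binom{v-t-i}{k-t-i}$, the sum reduces to $\sum_{j=1}^t \binom{t+i}{t-j}\binom{k-t-i}{j}$, which by Vandermonde's identity equals $\binom{k}{t}-\binom{t+i}{t}=\binom{k}{t}-\binom{t+i}{i}$, producing the claimed bound.

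\textbf{Main obstacle.} The principal technical difficulty is careful bookkeeping in estimating non-edges with a prescribed intersection pattern. The density hypothesis directly controls only $(t-1)$-set extensions, so the count of ``bad'' $t$-subsets with a given overlap with $T$ (part (a)) or with $T \cup U$ (part (b)) has to be obtained by summing over all relevant $(t-1)$-subsets and dividing by the appropriate multiplicity factor. Once those individual estimates are in place, the remaining step is to identify the combinatorial sum as an instance of Vandermonde that collapses to exactly $\binom{t+i}{i}$ or $\binom{k}{t}-\binom{t+i}{i}$, which is what makes the final expression clean.
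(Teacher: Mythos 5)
Your proposal is correct, and it reaches the stated bounds by a route that differs in structure from the paper's. The paper builds the clique one vertex at a time: starting from a clique on $j$ points, it excludes at most $\binom{j}{t-1}\epsilon(v-t+1)$ bad choices for the next vertex, takes the product over the extension steps, applies $\prod_j(1-a_j)\ge 1-\sum_j a_j$, and collapses the resulting sum of $\binom{j}{t-1}$'s via the hockey-stick identity to $\binom{t+i}{i}-1$ (resp.\ $\binom{k}{t}-\binom{t+i}{i}$). You instead perform a single union bound over the possible offending non-edges $S$ inside the candidate clique, stratified by their intersection pattern with $T$ (resp.\ $T\cup U$), count non-edges with a prescribed intersection by pivoting on a $(t-1)$-subset and dividing by the pivot multiplicity, and then collapse the resulting double sum via Vandermonde to the same constants. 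The two arguments are really dual forms of the same double count: the paper distributes the union bound across the sequential extension steps, while you take it in one shot. Your version avoids the induction and the product-to-sum inequality and makes the origin of the constant $\binom{t+i}{i}$ (as a full Vandermonde sum minus its $j=0$ term) more transparent; the paper's version has the advantage that a single one-vertex-extension lemma is reused verbatim for both parts and for any intermediate clique size, and it sidesteps the bookkeeping about overcounting pivots that your argument must handle explicitly. Your observation that in part (a) the clique condition depends only on $B=U\setminus T$, so the factor $\binom{t}{i}$ splits off cleanly, corresponds exactly to the paper's closing remark that one is free to pick any $t-i$ points of $T$ when converting extensions into edges $U$.
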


\begin{proof}
Let $J$ be a set of $j \ge t$ points which induce a clique $K_j^t$ in $G$.  The number of ways to choose a point $x$ in $V \setminus J$ so that $J \cup \{x\}$ also induces a clique is at least
$v-j-\binom{j}{t-1}z$, 
where $z$ is an upper bound on the number of non-edges incident with each $(t-1)$-subset.  With $z=\epsilon(v-t+1)$, and applying induction, the number of ways to extend $T$ to a clique induced by $T \cup U$, of size $t+i$, is at least
$$\frac{1}{i!} \prod_{t \le j < t+i} \left[ v\left(1- \binom{j}{t-1} \epsilon \right) - O(1) \right].$$
Note the $O(1)$ term depends on $t$ and $\epsilon$ but not on $v$.  
We now expand the dominant term of the product and invoke the inequality
$$\prod_j(1-a_j) \ge 1-\sum_j a_j.$$
Using an identity on the resulting sum of binomial coefficients $\binom{j}{t-1}$, one has the number of such extensions at least
$$\frac{v^i}{i!} \left[ 1- \binom{t+i}{i} \epsilon \right] + o(v^i).$$
Finally, in choosing an edge $U$ (not merely an extension of $T$), we are free to pick any $t-i$ points in $T$.
This proves (a).

Similarly, the number of ways to extend a clique on $T \cup U$ to a clique on $k$ points is at least
$$\frac{1}{(k-t-i)!} \prod_{t+i \le j < k} \left[ v \left(1- \binom{j}{t-1} \epsilon \right) - O(1) \right],$$
or, after expansion and identities,
$$\frac{v^{k-t-i}}{(k-t-i)!} \left[ 1- \left( \binom{k}{t} - \binom{t+i}{i} \right) \epsilon  \right] + o(v^{k-t-i}).$$
This proves (b).
\end{proof}

\rks
Lemma~\ref{mtilde}(a) essentially asserts that `most' entries of $\widehat{M}$ are nonzero, while part (b) asserts that those nonzero entries are close to those of $M$.

\section{Proof of the main theorem}

Our proof relies on a couple of easy facts from linear algebra.  Recall that the matrix norm $||\cdot||_{\infty}$ is induced from the same (max) norm on vectors.  We have $||A||_{\infty}$ equal to the maximum absolute row sum of $A$.
We note below that small perturbations in this norm (actually, in any induced norm) do not destroy positive definiteness.

\begin{lemma}
\label{pd}
Suppose $A$ and $\Delta A$ are Hermitian matrices such that every eigenvalue of $A$ is greater than $||\Delta A||_{\infty}$.  Then $A+\Delta A$ is positive definite.
\end{lemma}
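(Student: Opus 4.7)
The plan is to show that every eigenvalue of the Hermitian matrix $A+\Delta A$ is strictly positive, which is equivalent to positive definiteness. I would bound the smallest eigenvalue from below by combining a perturbation inequality for Hermitian matrices with a bound on the spectral radius of $\Delta A$ in terms of $\|\Delta A\|_\infty$.

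First, by Weyl's inequality for sums of Hermitian matrices,
$$\lambda_{\min}(A+\Delta A) \;\geq\; \lambda_{\min}(A) + \lambda_{\min}(\Delta A) \;\geq\; \lambda_{\min}(A) - \rho(\Delta A),$$
where $\rho(\cdot)$ denotes spectral radius. Second, I would bound $\rho(\Delta A) \leq \|\Delta A\|_\infty$. The cleanest route is Gershgorin's circle theorem: every eigenvalue $\mu$ of $\Delta A$ satisfies $|\mu - (\Delta A)_{ii}| \leq \sum_{j\neq i} |(\Delta A)_{ij}|$ for some $i$, and the triangle inequality then gives $|\mu| \leq \|\Delta A\|_\infty$. (Equivalently, one could use the general estimate $\|B\|_2 \leq \sqrt{\|B\|_1\|B\|_\infty}$ and note that a Hermitian $B$ satisfies $\|B\|_1 = \|B\|_\infty$, so $\rho(B) = \|B\|_2 \leq \|B\|_\infty$.) Combining the two bounds yields
$$\lambda_{\min}(A+\Delta A) \;\geq\; \lambda_{\min}(A) - \|\Delta A\|_\infty \;>\; 0$$
by the hypothesis that every eigenvalue of $A$ exceeds $\|\Delta A\|_\infty$.

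There is no real obstacle here; the lemma is a standard fact from perturbation theory. The only subtle point is that the bound $\rho(B) \leq \|B\|_\infty$ does require $B$ to be Hermitian (it can fail otherwise), so I would make sure to invoke Hermiticity of $\Delta A$ explicitly at the step where Gershgorin (or the interpolation bound on induced norms) is applied.
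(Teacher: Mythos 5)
Your proof is correct and follows essentially the same route as the paper, which simply cites $\rho(\Delta A) \le ||\Delta A||_{\infty}$ and leaves the Hermitian perturbation (Weyl/Rayleigh) step implicit. One small correction to your closing remark: the bound $\rho(B) \le ||B||_{\infty}$ holds for \emph{every} square matrix, since $||\cdot||_{\infty}$ is an induced operator norm (as the paper itself notes, any induced norm works); Hermiticity is needed only for the step $\lambda_{\min}(A+\Delta A) \ge \lambda_{\min}(A) - \rho(\Delta A)$, not for the Gershgorin/norm estimate.
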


\begin{proof}
This follows easily since the spectral radius (i.e. maximum eigenvalue) of $\Delta A$ satisfies
$$\rho(\Delta A) \le ||\Delta A||_{\infty}.$$
\end{proof}

We will momentarily invoke this fact with $A=M|_G$ and $\Delta A = \Delta M := \widehat{M}-M|_G$.

First though, recall Cramer's rule from college linear algebra.  For non-singular $A$, the system $A \vx = \vb$ has a solution given by 
$$x_i = \frac{\det(A_i)}{\det(A)},$$
where $A_i$ denotes the matrix $A$ with its $i$th column substituted for $\vb$.

Taken together, we conclude that the system (\ref{main-system}) has a positive solution $\vx$ provided
the least eigenvalues of both $M$ and $M_1$ exceed $|| \Delta M ||_\infty$.  Note that we may restrict attention to a single $M_1$ due to invariance of $M$ under the action of $\Sym_V$. 

A careful calculation of the eigenvalues of $M$ and $M_1$ is left for the next 2 sections; however, we summarize the important results here.

\begin{thm}
\label{eig-est}
Asymptotically in $v$, the least eigenvalue of $M$ is $$\theta_t = \binom{v-t}{k-t} + o(v^{k-t}),$$ and the least eigenvalue of $M_1$ is at least $\frac{1}{2} \theta_t$.  
\end{thm}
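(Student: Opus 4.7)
The plan is to separately handle the spectra of $M$ and $M_1$: the former via the Johnson-scheme algebra, the latter via a further reduction using the stabilizer of the replaced column, which localizes the hardest part of the calculation to a small invariant subspace. For $M$, note the factorization $M = W_{t,k} W_{t,k}^\top$, where $W_{t,k}$ is the $\binom{v}{t}\times\binom{v}{k}$ containment matrix of $t$-subsets in $k$-subsets; in particular $M$ is positive semidefinite. Since $M$ is a polynomial in the Johnson-scheme association matrices, the space $\Q^{\binom{V}{t}}$ splits as $V_0 \oplus V_1 \oplus \cdots \oplus V_t$ into common eigenspaces of $J(v,t)$, and $M$ acts as a scalar on each $V_j$. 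Evaluating on a convenient test vector in each $V_j$ (or invoking the Eberlein polynomials) yields
$$
\theta_j \;=\; \binom{v-t-j}{k-t}\binom{k-j}{t-j}, \qquad j=0,1,\ldots,t.
$$
The leading coefficient $\binom{k-j}{t-j}$ is strictly decreasing in $j$ (with minimum $1$ at $j=t$), so the smallest eigenvalue is $\theta_t = \binom{v-2t}{k-t}$, which agrees with $\binom{v-t}{k-t}$ up to lower order in $v$.

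For $M_1$, by $\Sym_V$-invariance of $M$ we may assume without loss of generality that the replaced column corresponds to a fixed $U_1 \in \binom{V}{t}$, so that $M_1 = M + (\vj - M e_{U_1})e_{U_1}^\top$. The stabilizer $H = \Sym_{V\setminus U_1}\times \Sym_{U_1}$ preserves both $M$ and $\vj$, hence commutes with $M_1$. Let $W$ be the $(t+1)$-dimensional $H$-fixed subspace of $\Q^{\binom{V}{t}}$, orthogonally spanned by characteristic vectors $w_0,\ldots,w_t$ of the orbits $\{T : |T\setminus U_1|=i\}$. Since $e_{U_1} = w_0 \in W$, one has $e_{U_1}^\top x = 0$ for every $x \in W^\perp$, and consequently $M_1 x = Mx$ on $W^\perp$; the spectrum of $M_1|_{W^\perp}$ thus lies in $\{\theta_0,\ldots,\theta_t\}$ and is trivially bounded below by $\theta_t$.

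The crux is the $(t+1)\times(t+1)$ block $M_1|_W$. Its entries in the orbit basis are $B(i',i) = \sum_{U:\,|U\setminus U_1|=i}\xi_{|T\setminus U|}$ (for any $T$ with $|T\setminus U_1| = i'$), with $\xi_i = \binom{v-t-i}{k-t-i}$, and their asymptotics in $v$ are explicit. Because each $V_j \cap W$ is one-dimensional and $M|_W$ already has spectrum exactly $\{\theta_0,\ldots,\theta_t\}$, in the eigenbasis of $M|_W$ the map $M_1|_W$ becomes $\operatorname{diag}(\theta_0,\ldots,\theta_t) + ac^\top$, a rank-one update whose components encode the projections of $e_{U_1}$ and of $\vj - Me_{U_1}$ onto the eigenspaces $V_j \cap W$. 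The eigenvalues of $M_1|_W$ then satisfy the secular equation
$$
\sum_{j=0}^t \frac{a_j c_j}{\lambda - \theta_j} \;=\; 1,
$$
and an asymptotic bookkeeping of $a_j, c_j$ (using the Johnson projections of $e_{U_1}$) is what yields the lower bound $\frac{1}{2}\theta_t$ on the smallest root for $v$ sufficiently large.

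The main obstacle is this last asymptotic step. The rank-one update replaces entries of magnitude $\Theta(v^{k-t})$ by the constant $1$, so the perturbation is not small in absolute norm; only after diagonalization does its effective weight on each $V_j\cap W$ become controlled. Tracking carefully how the projection of $e_{U_1}$ concentrates on the dominant eigenspace $V_0$ is what ultimately allows the $\frac{1}{2}\theta_t$ lower bound to survive the rank-one correction, and this is the content of the detailed calculation carried out in the following sections.
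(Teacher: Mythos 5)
Your analysis of $M$ is essentially sound and close in spirit to the paper's: you quote the exact Johnson-scheme eigenvalues $\theta_j=\binom{v-t-j}{k-t}\binom{k-j}{t-j}$ of $M=W_{t,k}W_{t,k}^\top$ (a correct, standard formula), while the paper derives only the asymptotics $\theta_j=\frac{1}{(k-t)!}\binom{k-j}{t-j}v^{k-t}+o(v^{k-t})$ from the Hahn-polynomial eigenmatrix; either way the least eigenvalue is $\theta_t=\binom{v-t}{k-t}+o(v^{k-t})$.

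The $M_1$ half has a genuine gap, in two respects. First, you are working with the wrong matrix: the paper defines $M_1=MB$, i.e.\ $M$ with the distinguished column replaced by the row-sum vector $\binom{k}{t}\binom{v-t}{k-t}\vj=M\vj$, whereas you replace that column by $\vj$ itself, $M_1=M+(\vj-Me_{U_1})e_{U_1}^\top$. The scaling is not cosmetic: with your normalization the stated bound is false. Your own reduction shows the spectrum of your $M_1$ consists of each $\theta_j$ with multiplicity $m_j-1$ together with the $t+1$ eigenvalues of $M_1|_W$; expanding the replaced column (using $\vj=\theta_0^{-1}M\vj$) gives $\det M_1=\det M/\theta_0=\theta_1^{m_1}\cdots\theta_t^{m_t}$, so those $t+1$ eigenvalues have product $\theta_1\cdots\theta_t=O(v^{(k-t)t})$, forcing at least one of them to be $O(v^{(k-t)t/(t+1)})=o(v^{k-t})$, far below $\tfrac12\theta_t\sim\tfrac12 v^{k-t}/(k-t)!$. (Already for $t=1$, $k=2$ the small eigenvalue of your $M_1$ tends to $\tfrac12$, not to $(v-2)/2$.) Second, even with the correctly scaled $M_1$, the step that constitutes the theorem's content --- showing every root of the $(t+1)\times(t+1)$ secular equation exceeds $\tfrac12\theta_t$ --- is exactly what you defer to ``asymptotic bookkeeping \dots carried out in the following sections.'' In the paper this is not an asymptotic estimate of projection coefficients: one computes $V^\top BV=I+\mathbf{u}\,\mathbf{m}^\top$ exactly, obtains $\psi(x)=\chi_R(x)/\chi_\Theta(x)=1+\frac{\theta_0 m_0}{\theta_0-x}-n^{-1}\sum_{j}\frac{\theta_j m_j}{\theta_j-x}$, and the bound follows from the exact evaluation $\psi(\theta_t/2)>2-2n^{-1}\sum_j m_j=0$ (using $\theta_t\le\theta_j$ and $\sum_j m_j=n$) together with a count of sign changes of $\psi$ and $\chi_\Theta$ near the $\theta_j$; the positive term $\theta_0 m_0/(\theta_0-x)$ that makes this evaluation work is precisely the contribution of the column being $\theta_0\vj$ rather than $\vj$. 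So completing the proof requires both the paper's $M_1=MB$ and the explicit secular-function/sign-change argument, not a deferred asymptotic estimate.
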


Of course, the same lower bounds on eigenvalues remain true for the principal submatrices restricted to rows and columns of $M$ indexed by edges of $G$.

Now, it remains to estimate the maximum absolute row sum of $\Delta M$.

\begin{prop}
\label{inf-norm-est}
Let $G$ be a $(1-\epsilon)$-dense simple $t$-graph, and define $\Delta M$ as above.  For small $\epsilon$, and asymptotically in $v$, 
\begin{equation}
\label{inf-norm}
|| \Delta M ||_\infty < \binom{v-t}{k-t} \binom{k}{t}^2 \epsilon + o(v^{k-t}).
\end{equation}
\end{prop}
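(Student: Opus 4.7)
The plan is to bound each row sum of $\Delta M$ by splitting the sum over edges $U$ of $G$ according to (i) the overlap size $i=|T\setminus U|$, and (ii) whether $T\cup U$ induces a clique in $G$. Since $\widehat{M}(T,U)$ counts $k$-subsets containing $T\cup U$ that are cliques in $G$, while $M(T,U)=\xi_i$ counts \emph{all} $k$-subsets containing $T\cup U$, the difference is nonnegative and we can drop the absolute values: $|\Delta M(T,U)| = \xi_{|T\setminus U|} - \widehat{M}(T,U)$.

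In the ``clique'' case ($T\cup U$ induces a clique in $G$), Lemma~\ref{mtilde}(b) directly yields
\[
|\Delta M(T,U)| \;\le\; \xi_i\!\left[\binom{k}{t}-\binom{t+i}{i}\right]\epsilon + o(\xi_i),
\]
while the trivial bound on the number of such edges is $\binom{t}{i}\binom{v-t}{i}$. In the ``non-clique'' case, $\widehat{M}(T,U)=0$, so $|\Delta M(T,U)|=\xi_i$. The number of non-clique $U$'s with $|T\setminus U|=i$ is at most the total such $U$'s (which is $\binom{t}{i}\binom{v-t}{i}$) minus the lower bound on the number of clique $U$'s from Lemma~\ref{mtilde}(a); asymptotically this is at most $\binom{t}{i}\binom{t+i}{i}\binom{v}{i}\epsilon + o(v^i)$.

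Adding the two contributions for fixed $i$ and then summing over $i$, the key cancellation is
\[
\left[\binom{k}{t}-\binom{t+i}{i}\right]\epsilon + \binom{t+i}{i}\epsilon \;=\; \binom{k}{t}\epsilon,
\]
so the total row sum is bounded by
\[
\|\Delta M\|_\infty \;\le\; \epsilon\binom{k}{t}\sum_{i=0}^{t}\binom{t}{i}\binom{v-t}{i}\xi_i + o(v^{k-t}).
\]
An invocation of the identity~(\ref{sumwts}) collapses the sum to $\binom{v-t}{k-t}\binom{k}{t}$, yielding the desired bound $\binom{v-t}{k-t}\binom{k}{t}^2\epsilon + o(v^{k-t})$.

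The main bookkeeping obstacle is that the clique and non-clique contributions each involve the quantity $\binom{t+i}{i}\epsilon$ (with opposite signs, in a sense), and one must align Lemma~\ref{mtilde}(a) and (b) so that these pieces combine cleanly to $\binom{k}{t}\epsilon$ before invoking~(\ref{sumwts}); some care is also needed to absorb the difference between $\binom{v-t}{i}$ and $\binom{v}{i}$ into the $o(v^{k-t})$ error term. Otherwise the argument is routine once the row-sum is decomposed along these lines.
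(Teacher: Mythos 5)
Your proposal is correct and follows essentially the same route as the paper's proof: decompose each row sum by the overlap parameter $i$ and by whether $T\cup U$ induces a clique, bound the non-clique count via Lemma~\ref{mtilde}(a) and the clique-case deficit via Lemma~\ref{mtilde}(b), observe the cancellation of the $\binom{t+i}{i}\epsilon$ terms, and collapse the remaining sum with (\ref{sumwts}). The only cosmetic difference is that you make explicit the nonnegativity of the entries of $\Delta M$ and the absorption of $\binom{v}{i}$ versus $\binom{v-t}{i}$ into the error term, which the paper leaves implicit.
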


\begin{proof}
Let $a(i)$ and $b(i)$ denote the expressions given in the statement of Lemma~\ref{mtilde}, parts (a) and (b), respectively.  In row $T$ and columns $U$ with $|T \setminus U|=i$, there are at least $a(i)$ entries where $\widehat{M}$ is nonzero due to $T \cup U$ inducing a clique.  
That is, there are at most $\binom{v-t}{i} \binom{t}{i} - a(i)$ such entries which vanish in $\widehat{M}$.

When $T \cup U$ does induce a clique, we have $\widehat{M} (T,U) \ge b(i)$ and $M(T,U)=\xi_i$.  That is, $\Delta M$ is at most of order $\xi_i - b(i)$ in these entries.

Taken together,
\begin{eqnarray*}
|| \Delta M ||_\infty &<&  \sum_{i=0}^t  \left[ \binom{v-t}{i} \binom{t}{i} -a(i) \right] \xi_i + \binom{v-t}{i} \binom{t}{i} (\xi_i-b(i)) \\
&=& \epsilon \sum_{i=0}^t  \binom{v-t}{i} \binom{t}{i} \binom{k}{t} \xi_i + o(v^{k-t}). \\
\end{eqnarray*}
After invoking (\ref{sumwts}), we obtain the desired bound (\ref{inf-norm}).
\end{proof}

By Lemma~\ref{pd}, Theorem~\ref{eig-est} and Proposition~\ref{inf-norm-est}, 
\marginpar{\color[rgb]{1,0,0} The gap in the \\argument is here.\\  See \S 6 for a fix.}
the vector $\widehat{M}^{-1} \vj$ is (asymptotically in $v$) entrywise positive for 
$$\epsilon < C \binom{k}{t}^{-2}.$$  Therefore, we have an induced fractional decomposition of $G$ into copies of $K_k^{t}$.

We should note that there may be a hope of positive solutions to (\ref{main-system}) for some (possibly all) graphs $G$ even if this worst-case bound for $\epsilon$ were exceeded.

Also, it is probably possible to avoid using Cramer's rule and instead analyze the conditioning number $\kappa(M)$.  However, this is not likely to yield any substantially better bounds on $\epsilon$.

It now remains to prove Theorem~\ref{eig-est}, and this is the subject of the next two sections.

\section{The Johnson scheme and eigenvalue estimates for $M$}

For our purposes, a $k$-{\em class association scheme} on a set $\cX$ consists of $k+1$
nonempty symmetric binary relations $R_0,\dots,R_k$ which partition 
$\cX \times \cX$, such that
\begin{itemize}
\item
$R_0$ is the identity relation, and 
\item
for any $x,y \in \cX$ with $(x,y) \in R_h$, the number of $z \in \cX$ such 
that $(x,z) \in R_i$ and $(z,y) \in R_j$ is the {\em structure constant} 
$p^h_{ij}$ depending only on $h,i,j$.  
\end{itemize}
Let $|\cX|=n$. For $i=0,\dots,k$, define the $n \times n$ {\em adjacency 
matrix} $A_i$, indexed by entries of $\cX$, to have $(x,y)$-entry equal to $1$ if 
$(x,y) \in R_i$, and $0$ otherwise.  It is said that $x$ and $y$ are $i$th {\em associates} when 
$(x,y) \in R_i$.
 
By definition of the structure constants, 
$A_i A_j = \sum_h p^h_{ij} A_h.$
In this way, the adjacency matrices span not only a subspace of the $n \times n$ matrices, but a matrix algebra called the 
{\em Bose-Mesner algebra}.

Interestingly, the adjacency matrices are orthogonal idempotents with
respect to entrywise multiplication, and 
$$A_0+\dots+A_k = J,$$
the all ones matrix.  From spectral theory, the Bose-Mesner algebra also has a 
basis of orthogonal idempotents $E_0,\dots,E_k$ with respect to ordinary matrix
multiplication, and such that 
$$E_0+\dots+E_k=I.$$  
A convention is adopted so that $E_0 = \frac{1}{n} J$, which must be one of these idempotents.  

For more on the theory of association schemes, the reader is directed to 
Chapter 30 of \cite{WvL} for a nice introduction or to Chris Godsil's notes
\cite{godsil} for a very comprehensive reference.

The {\em Johnson scheme} $J(t,v)$
has as elements $\binom{V}{t}$, where $S,T \in \binom{V}{t}$ are declared
to be $i$th associates if and only if $|S \cap T|=t-i$. 

The adjacency matrices and (a certain ordering of) the orthogonal idempotents are related via 
\begin{equation}
\label{aerel}
A_i = \sum_{j=0}^t P_{ij} E_j,
\end{equation} 
where $P=[P_{ij}]$ is the {\em first eigenmatrix}.  For $J(t,v)$, its entries 
are given by
\begin{equation}
\label{hahn}
P_{ij} = \sum_{s=0}^i (-1)^{i-s} \binom{t-s}{i-s} \binom{t-j}{s} 
\binom{v-t+s-j}{s}.
\end{equation}
The expression (\ref{hahn}) is a polynomial of degree $2i$ in $j$.  It is a 
relative of the family of {\em Hahn polynomials}.
From (\ref{aerel}), we have
$$M=\sum_{i=0}^t \xi_i A_i = \sum_{j=0}^t \theta_j E_j,$$
where 
\begin{equation}
\label{eigformula}
\theta_j = \sum_{i=0}^t \xi_i P_{ij}.
\end{equation}
Since  the $E_j$ are orthogonal idempotents, it follows that the 
eigenvalues of $M$ are $\theta_j$, having 
multiplicity 
$$m_j = {\rm rank}(E_j) = \binom{v}{j}-\binom{v}{j-1}.$$
Of course, columns of the $E_j$ are eigenvectors for $\theta_j$.

An easy calculation with convolution identities gives the closed form
\begin{equation*}
\theta_0 = \sum_{i=0}^t \xi_i \binom{t}{i} \binom{v-t}{i} = \binom{v-t}{k-t} \binom{k}{t}.
\end{equation*}
This is simply the row sum of $M$, or (\ref{sumwts}).  The other eigenvalues are 
more complicated, but for our purposes an estimate suffices.  

\begin{prop}
\label{eigsM}
The eigenvalues of $M$ are $\theta_j$, each of multiplicity $m_j = \binom{v}{j}-\binom{v}{j-1}$.  
For sufficiently large $v$, the $\theta_j$ are distinct reals of order $v^{k-t}$.
\end{prop}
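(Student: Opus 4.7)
The plan is to extract the leading term in $v$ of each $\theta_j$ directly from $\theta_j = \sum_{i=0}^t \xi_i P_{ij}$, and then read off distinctness and growth rate from it. Reality of the $\theta_j$ is automatic since $M$ is real symmetric. The multiplicity assertion follows from the spectral decomposition $M = \sum_j \theta_j E_j$: because the $E_j$ are orthogonal projections of rank $m_j$ summing to $I$, as soon as the $\theta_j$ are pairwise distinct the $\theta_j$-eigenspace equals the image of $E_j$, and the multiplicity is exactly $m_j$.

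The computational heart is the coefficient of $v^{k-t}$ in $\theta_j$. From $\xi_i = v^{k-t-i}/(k-t-i)! + o(v^{k-t-i})$ and inspection of (\ref{hahn}), the highest-order-in-$v$ term of $P_{ij}$ comes from $s=i$ and equals $\binom{t-j}{i} v^i/i! + o(v^i)$. Thus $\xi_i P_{ij}$ contributes $\binom{t-j}{i}/[(k-t-i)!\,i!]$ to the $v^{k-t}$-coefficient of $\theta_j$. Rewriting $1/[(k-t-i)!\,i!] = \binom{k-t}{i}/(k-t)!$ and invoking Vandermonde's identity yields
$$[v^{k-t}]\,\theta_j \;=\; \frac{1}{(k-t)!}\sum_{i=0}^t \binom{k-t}{i}\binom{t-j}{i} \;=\; \frac{1}{(k-t)!}\binom{k-j}{t-j}.$$

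Since $\binom{k-j}{t-j}/\binom{k-j-1}{t-j-1} = (k-j)/(t-j) > 1$ under the standing assumption $k \ge 2t$, the sequence $\binom{k-j}{t-j}$ for $j=0,1,\dots,t$ is strictly decreasing and the leading coefficients are pairwise distinct positive rationals. Hence for large $v$ the $\theta_j$ are distinct reals of order $v^{k-t}$, and the multiplicities fall out of the spectral decomposition. The only real obstacle is the bookkeeping in the Vandermonde step; the Johnson-scheme machinery (the ranks of the $E_j$ and the decomposition $M = \sum \theta_j E_j$) is entirely standard.
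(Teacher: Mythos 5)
Your proposal is correct and follows essentially the same route as the paper: isolate the dominant $s=i$ term of $P_{ij}$, combine with $\xi_i$, apply the Vandermonde convolution to get the leading coefficient $\binom{k-j}{t-j}/(k-t)!$, and conclude distinctness from its strict monotonicity in $j$, with the multiplicities read off from the spectral decomposition $M=\sum_j\theta_j E_j$. Your explicit note that the multiplicity claim requires the $\theta_j$ to be distinct is a small but welcome clarification the paper leaves implicit.
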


\begin{proof} Computing directly from (\ref{hahn}) and (\ref{eigformula}),
\begin{eqnarray*}
\theta_j &=& 
\sum_{i=0}^t \xi_{i} P_{ij}  \\
&=& 
\sum_{i=0}^t \binom{v-t-i}{k-t-i} 
\sum_{s=0}^i (-1)^{i-s} \binom{t-s}{i-s} \binom{t-j}{s}
\binom{v-t+s-j}{s}.
\end{eqnarray*}
Now separating the $s=i$ term,
\begin{eqnarray*}
\theta_j 
&=&
\sum_{i=0}^{t-j} \binom{v-t-i}{k-t-i} 
\binom{v-t+i-j}{i} 
\binom{t-j}{i}+o(v^{k-t})\\
&=&
\frac{1}{(k-t)!}\left[ 
\sum_{i=0}^{t-j}
\binom{k-t}{i} \binom{t-j}{i} \right] v^{k-t} + o(v^{k-t})\\
&=&\frac{1}{(k-t)!}
\binom{k-j}{t-j} v^{k-t} + o(v^{k-t}).
\end{eqnarray*}
The leading coefficient is a multiple of $(k-j)^{k-t}$, which is decreasing in $j$ for $0 \le j \le t$.
This proves the $\theta_j$ are distinct as $v \rightarrow \infty$.
\end{proof}

It should be remarked that similar estimates also appear in Section 4 of \cite{SY}, a recent article on quasi-random hypergraphs.  In any case, the proof of Proposition~\ref{eigsM} also establishes the first part of Theorem~\ref{eig-est}.

\begin{cor}
For large $v$, the least eigenvalue of $M$ is $$\theta_t = \binom{v-t}{k-t} + o(v^{k-t}).$$
\end{cor}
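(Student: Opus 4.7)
The plan is to read the corollary directly off the asymptotic formula obtained in the course of proving Proposition~\ref{eigsM}. That proof establishes
$$\theta_j = \frac{1}{(k-t)!} \binom{k-j}{t-j}\, v^{k-t} + o(v^{k-t}), \qquad 0 \le j \le t,$$
so the remaining work is to (i) identify the minimizing index $j$, and (ii) evaluate the leading coefficient at that index.

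For (i), I would verify that the leading coefficient $\binom{k-j}{t-j}$ is strictly decreasing on $\{0,1,\dots,t\}$ under the standing hypothesis $k \ge 2t > t$. This is visible from the ratio
$$\frac{\binom{k-j-1}{t-j-1}}{\binom{k-j}{t-j}} = \frac{t-j}{k-j} < 1 \qquad (0 \le j < t),$$
so the minimum is attained at $j=t$. Since all of the gaps $\theta_j - \theta_{j+1}$ between consecutive leading terms are of order $v^{k-t}$, for $v$ sufficiently large the $o(v^{k-t})$ error terms are dominated by these gaps and $\theta_t$ is genuinely the smallest eigenvalue of $M$.

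For (ii), specialize the asymptotic formula to $j=t$: the leading coefficient becomes $\binom{0}{0}/(k-t)! = 1/(k-t)!$, so $\theta_t = v^{k-t}/(k-t)! + o(v^{k-t})$. Combined with the estimate $\binom{v-t}{k-t} = v^{k-t}/(k-t)! + o(v^{k-t})$ already used in Section~2 for $\xi_0$, this yields $\theta_t = \binom{v-t}{k-t} + o(v^{k-t})$, as claimed. (As a sanity check, one can instead specialize (\ref{hahn}) directly: only the $s=0$ summand survives since $\binom{0}{s}=0$ for $s\ge 1$, giving $P_{it}=(-1)^i\binom{t}{i}$ and hence $\theta_t = \sum_i (-1)^i \binom{t}{i}\binom{v-t-i}{k-t-i}$, whose $i=0$ term equals $\binom{v-t}{k-t}$ and whose remaining terms are $O(v^{k-t-1})$.) There is no genuine obstacle; the substantive work was done in Proposition~\ref{eigsM}, and the corollary merely extracts which of the $\theta_j$ is smallest and records its leading-order size.
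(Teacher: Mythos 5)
Your proposal is correct and follows essentially the same route as the paper, which simply reads the corollary off the asymptotic formula $\theta_j = \frac{1}{(k-t)!}\binom{k-j}{t-j}v^{k-t}+o(v^{k-t})$ established in the proof of Proposition~\ref{eigsM}, with the minimum at $j=t$ because the leading coefficient decreases in $j$. Your extra check via the ratio $\frac{t-j}{k-j}<1$ and the direct specialization of (\ref{hahn}) at $j=t$ are harmless confirmations of the same argument.
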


\section{Eigenvalue estimates for $M_1$}

Our focus now shifts to $M_1$.  To this end, define 
$$B=\left[\begin{array}{c|ccc}
1 &  \vo \\
\hline
\vj & I  \\
\end{array}
\right],$$
so that $M_1=MB$ is $M$ with first column replaced by the constant vector $\binom{k}{t} \binom{v-t}{k-t} \vj$.

Observe that the eigenvectors of $B$ are precisely those vectors with first coordinate equal to zero.

The column space of each primitive idempotent $E_j$ for the Johnson scheme $J(t,v)$ can be orthogonally decomposed as 
$$\langle \ve^{(j)} \rangle \oplus \langle \ve^{(j)} \rangle^\perp,$$
where $\ve^{(j)}$ is a unit vector parallel to the first column of $E_j$ and its complement $\langle \ve^{(j)} \rangle^\perp$ is $B$-invariant.

Let $V=[\ve^{(0)} \dots \ve^{(t)}]$ and let $V_0$ be the matrix whose columns are a union of orthonormal bases for the $\langle \ve^{(j)} \rangle^\perp$.

\begin{prop}
\label{eigsM1}
Each eigenvalue of $M_1$ is $v^{k-t}(c+o(1))$, $c$ depending only on $k,t$, and the eigenspace indexing, with $\theta_t/2$ as a lower bound.
\end{prop}

\begin{proof}
Let $Q=[V~ V_0]$, an orthogonal matrix.  Then
$$Q^\top MB Q = 
\left[
\begin{array}{cc}
R & O \\
* & D 
\end{array}
\right],$$
where $R=V^\top MBV$, a $(t+1) \times (t+1)$ matrix, and $D$ is the 
$(n-t-1) \times (n-t-1)$ diagonal 
matrix having eigenvalues $\theta_j$, each with multiplicity $m_j-1$.
It follows that the characteristic polynomial of $M_1=MB$ factors as
$$\chi_{MB}(x) = \chi_{R}(x) \prod_{j=1}^t (x-\theta_j)^{m_j-1}.$$
We recover the original eigenvalues $\theta_j$ as all but $t+1$ of
the eigenvalues of $M_1$.  In light of Proposition~\ref{eigsM}, it remains 
to consider the eigenvalues of $R$.

Let $\Theta = {\rm diag}(\theta_0,\theta_1,\dots,\theta_t)$.
By definition of $V$, we have $MV = V \Theta$.
So, since $M$ is symmetric,
$$R=V^\top MBV = (MV)^\top BV = 
\Theta V^\top BV.$$
It is a routine calculation that 
\begin{equation}
\label{vtbv}
V^\top BV = I+
\left(
\left[
\begin{array}{c}
1 \\
0 \\
\vdots \\
0 
\end{array}
\right]
- \binom{v}{t}^{-1}
\left[
\begin{array}{c}
1 \\
1 \\
\vdots \\
1 
\end{array}
\right]
\right)
\left[
\begin{array}{cccc}
m_0 & m_1 & \dots & m_t
\end{array}
\right].
\end{equation}
The last term on the right of (\ref{vtbv}) is rank one.  Put 
$${\bf u} = \left[
\begin{array}{c}
1 \\
0 \\
\vdots \\
0 
\end{array}
\right]
- \binom{v}{t}^{-1}
\left[
\begin{array}{c}
1 \\
1 \\
\vdots \\
1 
\end{array}
\right]~~~\text{and}~~~{\bf m} = \left[
\begin{array}{c}
m_0 \\
m_1 \\
\vdots \\
m_t 
\end{array}
\right].
$$
Recall for column vectors ${\bf u}$ and ${\bf m}$ the identity
$$\det(I+{\bf u} {\bf m}^\top) = 1+{\bf u}^\top {\bf m}.$$
It follows that the characteristic polynomial of $R$ can be computed rather easily.  
We have
\begin{eqnarray}
\nonumber
\chi_R(x) &=& \det(\Theta(I+ {\bf u} {\bf m}^\top) -xI) \\
\nonumber
&=& (1+  {\bf u}^\top (\Theta-xI)^{-1} \Theta {\bf m})\chi_\Theta(x) \\
\label{chi-r}
&=& \left[1+\frac{\theta_0 m_0}{\theta_0-x} - n^{-1} \sum_{j=0}^t \frac{\theta_j m_j}{\theta_j-x} \right] \chi_\Theta(x).
\end{eqnarray}
Although we are not able to explicitly compute the eigenvalues of $R$ in terms of those of $\Theta$, it is sufficient for our purposes to analyze sign changes and obtain an interlacing result.  For this purpose, consider the rational function $\psi(x) = \chi_R(x)/\chi_\Theta(x)$.  This is the first factor on the right of (\ref{chi-r}).  

Near $\theta_j$, $j>0$, the dominant term in $\psi$ is 
$-n^{-1} \theta_j m_j/(\theta_j-x)$, which changes from negative to positive as $x$ increases.  The opposite is true near $\theta_0$.  

Recall that $\theta_t < \cdots < \theta_1 < \theta_0$, dictating the sign changes of $\chi_\Theta$.
Finally, observe 
\begin{eqnarray*}
\psi ( \theta_t/2 ) &>& 1+1-n^{-1} \sum \frac{\theta_j 
m_j}{\theta_j - \theta_j/2} \\
&=& 2-2(m_0+m_1+\dots + m_t)/n ~=~ 0.
\end{eqnarray*}
These various observations are summarized in Table~\ref{signs}.  It follows that $R$ has $t+1$ different real eigenvalues, each exceeding 
$\frac{1}{2}\theta_t$.  The result now follows from 
Proposition~\ref{eigsM}.
\end{proof}

\begin{table}[htpb]
$$
\begin{array}{c}
{\rm odd}~t~~({\rm even~degree}) \\
\begin{array}{|c|cccccccc|}
\hline
x & \theta_t/2 & \theta_t & \theta_{t-1} & \cdots & \theta_2 & \theta_1 & 
\theta_0 & \infty \\
\hline
\psi(x)&        + & -+ & -+ & \cdots & -+ & -+ & +- & + \\
\chi_\Theta(x)& + & +- & -+ & \cdots & -+ & +- & -+ & + \\
\chi_R(x) &     + & -  & +  & \cdots & + & - & - & +  \\
\hline
\end{array}
\\
~ \\
{\rm even}~t~~({\rm odd~degree}) \\
\begin{array}{|c|cccccccc|}
\hline
x & \theta_t/2 & \theta_t & \theta_{t-1} & \cdots & \theta_2 & 
\theta_1 & \theta_0 & \infty \\
\hline
\psi(x)&        + & -+ & -+ & \cdots & -+ & -+ & +- & + \\
\chi_\Theta(x)& + & +- & -+ & \cdots & +- & -+ & +- & - \\
\chi_R(x) &     + & -  & +  & \cdots & -  & +  & +  & - \\
\hline
\end{array}
\end{array}
$$
\caption{sign changes near eigenvalues of $M$}
\label{signs}
\end{table}

\hrule

\section{Repairing the argument and constants}

The key problem is that Lemma~\ref{pd} (which upper bounds the spectral radius of a perturbation in terms of its $\infty$-norm) does not apply to the non-Hermitian matrix $M_1$. (It may still be the case that $M_1$ satisfies the conclusion.)  

Here we patch the argument in a mostly self-contained format.

As before, let $M$ be the square matrix indexed by $t$-subsets of a $v$-set, and whose $(T,U)$-entry holds the number of cliques $K_k^t$ covering edges $T$ and $U$ in $K_v^t$.
We have 
$$M=\sum_{i=0}^t \xi_i A_i,~~\text{where } \xi_i =  \binom{v-t-i}{k-t-i},$$
lying in the Bose-Mesner algebra of the Johnson scheme $J(t, v)$ generated by adjacency matrices
$\{I = A_0,A_1,\dots,A_t\}$; see \cite{WvL}.
Using a spectral decomposition, we also have $M=\sum_{j=0}^t \theta_j E_j$, with the eigenvalues $\theta_j$ computed as in Section 4.  The largest of these is the rowsum of $M$, or 
$$\theta_0 = \binom{k}{t} \binom{v-t}{k-t}.$$
Let $G$ be a $t$-graph on $v$ vertices which has minimum codegree $\delta_{t-1}(G) \ge \epsilon(v-t+1)$.   Let $\widehat{M}$ have its rows and columns indexed by edges of $G$, and record in its $(T,U)$-entry the number of cliques $K_k^t$ containing both $T$ and $U$.  This is a restricted analog of $M$, now for $G$ instead of $K_v^t$.  We then have a fractional decomposition of $G$ into cliques $K_k^t$ (in fact into `fans' $\cF_T=\{K \subset G: T \in K \cong K_k^t \}$) if  
\begin{equation}
\label{eqn}
\widehat{M} \vx = \theta_0 \vj
\end{equation}
has an entrywise nonnegative (rational) solution $\vx$.   The constant $\theta_0$ appears for convenience only.  As mentioned above, we have $M \vj = \theta_0 \vj$.  

To patch the argument for a nonnegative solution of (\ref{eqn}), we follow a strategy in the doctoral dissertation of Kseniya Garaschuk \cite{Kseniya}. The key idea is to use the following error estimate, where recall $|| \cdot ||_\infty$ represents the maximum absolute row sum (entry) for a matrix (vector).

\begin{lemma}
\label{cond}
Let $A \vx = \vb$ be a square system and suppose $A+\Delta A$ is a perturbation with $||A^{-1} \Delta A ||_\infty <1$.  Then $A+\Delta A$ is nonsingular and the unique solution
$\vx + \Delta\vx$ to the equation $(A+\Delta A)(\vx + \Delta \vx) = \vb$ has
\begin{equation}
\label{est}
\frac{||\Delta \vx||_\infty}{||\vx||_\infty} \le \frac{||\Delta A||_\infty ||A^{-1}||_\infty}{1-||A^{-1} \Delta A||_\infty}.
\end{equation}
\end{lemma}
The above is a special case of \cite[\S 37.5, Fact 7]{HLA}. It can be proved using a series expansion of
$(I+A^{-1}\Delta A)^{-1}$ and the triangle inequality.

We apply Lemma~\ref{cond} with $A=M$, $\vx = \vj$, $\vb = \theta_0 \vj$, and
$$A+\Delta A= 
\left[ \begin{array}{c|c}
\widehat{M} & 0 \\ 
\hline
M|_{\overline{G} \times G} & \binom{v-t}{k-t} I  
\end{array}
\right],
$$
where the division of entries (left/right and top/bottom) correspond to edges and non-edges of $G$.  In the lower left, we retain the corresponding entries of $M$.
In \cite{ratdec}, there is a bound on the perturbation $M|_{G \times G} - \widehat{M}$; it is not hard to extend this to a bound on $\Delta A$.  In what follows, we put $n_i=\binom{v-t}{i} \binom{t}{i}$, the number of $t$-sets intersecting a given $t$-set in $t-i$ points, or alternatively the rowsum of $A_i$.

\begin{prop}
\label{norm-perturb}
With $\Delta A$ defined as above,
$$||\Delta A||_\infty < \epsilon \binom{k}{t}^2 \binom{v}{k-t} + o(v^{k-t}).$$
\end{prop}

\begin{proof}
Let $T$ be a fixed edge of $G$, and suppose $i$ is an integer with $0 \le i \le t$.
Following Lemma 2.1(a) of \cite{ratdec}, there are at most $\epsilon \binom{t+i}{i} n_i+o(v^i)$ subsets $U$ with $|U|=t$, $|T \setminus U|=i$, and such that $T \cup U$ fails to induce a clique in $G$.  (The argument is the same whether $U$ is an edge or non-edge of $G$.)  For such $U$, the $(T,U)$-entry of $A+\Delta A$ vanishes, so that $\Delta A(T,U)=-\xi_i$.  Otherwise, when $T \cup U$ does induce a clique, Lemma 2.1(b) of \cite{ratdec} shows that at most $\epsilon (\binom{k}{t}-\binom{t+i}{i}) \xi_i+ o(v^{k-t})$ cliques $K_k^t$ in $K_v^t$ are `broken' in $G$. In total, the row of $\Delta A$ indexed by $T$ has norm at most
$$\epsilon \sum_{i=0}^t [\tbinom{t+i}{i} n_i \xi_i +  n_i (\tbinom{k}{t}-\tbinom{t+i}{i}) \xi_i]+ o(v^{k-t}) =  \epsilon \binom{k}{t} \sum_{i=0}^t n_i\xi_i+ o(v^{k-t}) = \epsilon \binom{k}{t}^2 \binom{v-t}{k-t}+ o(v^{k-t}).$$
Next, by design, the lower left entries of $\Delta A$ are all zero, as are the lower diagonal entries of $\Delta A$.  It follows that rows indexed by non-edges of $G$ have the same upper bound; in fact, we need only use the `first half' of the above estimate.
\end{proof}

Next, we provide a bound on $A^{-1}$ coming from calculations in the Bose-Mesner algebra.

\begin{prop}
\label{norm-inverse}
$$||A^{-1}||_\infty <  3^t \left[ \binom{v}{k-t}+o(v^{k-t}) \right]^{-1}.$$
\end{prop}

\begin{proof}
We compute
\begin{align}
\nonumber
||A^{-1}||_\infty & = \left| \left| \sum_{j=0}^t \theta_j^{-1} E_j \right| \right|_\infty & E_j\text{ are orthogonal idempotents}\\
\nonumber
&  = \frac{1}{\binom{v}{t}} \left| \left| \sum_{j=0}^t \theta_j^{-1} \sum_{i=0}^t \frac{m_j}{n_i} P_{ij} A_i  \right| \right|_\infty & \text{change of basis; identity (30.5) in \cite{WvL}}\\
\nonumber
&= \frac{1}{\binom{v}{t}} \sum_{i=0}^t ||A_i||_\infty \left| \sum_{j=0}^t \theta_j^{-1} \frac{m_j}{n_i} P_{ij} \right|  & 
\text{swap sums; recall }A_i\text{ are }\{0,1\}\text{-matrices}\\
\nonumber
&= \frac{1}{\binom{v}{t}} \sum_{i=0}^t \sum_{j=0}^t  \left|  \theta_j^{-1} m_j P_{ij} \right|  & 
\text{triangle ineq.; }||A_i||_\infty =  n_i\\
\label{intermediate}
&\le \frac{1}{\binom{v}{t}} \sum_{j=0}^t \theta_j^{-1} m_j \sum_{i=0}^t |P_{ij}|, & \text{swap sums again; } \theta_j, m_j>0
\end{align}
where $m_j:=\binom{v}{j}-\binom{v}{j-1}$ and $P_{ij}:=\sum_{h=0}^i (-1)^{i-h} \binom{t-h}{i-h} \binom{t-j}{h} \binom{v-t+h-j}{h}$ as in \cite{ratdec}.  Note $P_{ij}$ is a polynomial of degree $\min\{i,t-j\}$ in $v$.  Considering dominant terms only, the inner sum in (\ref{intermediate}) is then estimated as 
$$\sum_{i=0}^t |P_{ij}| < \sum_{i=t-j}^{t} \binom{j}{i-j+t} \binom{v}{t-j} +o(v^{t-j}) \le 2^j \binom{v}{t-j} + o(v^{t-j}).$$
Substituting this (and $m_j,\theta_j$) into  (\ref{intermediate}), we have the estimate
\begin{align}
\label{intermediate2}
||A^{-1}||_\infty &< \frac{1}{\binom{v}{t}} \left[ \binom{v}{k-t}+o(v^{k-t})\right]^{-1} \left[\sum_{j=0}^t \binom{k-j}{t-j}^{-1}  2^j \binom{v}{j} \binom{v}{t-j}+o(v^t) \right]\\
\nonumber
& <   \left[ \binom{v}{k-t}+o(v^{k-t})\right]^{-1} \left[\sum_{j=0}^t  2^j \binom{t}{j} +o(1) \right]\\
\nonumber
& < 3^t \left[ \binom{v}{k-t}+o(v^{k-t}) \right]^{-1}.\qedhere
\end{align}
\end{proof}

In our use of Lemma~\ref{cond}, it is sufficient to have $|| \Delta A||_\infty ||A^{-1}||_\infty < \frac{1}{2}$, since the norm is sub-multiplicative and the right side of (\ref{est}) becomes less than one.  It follows from Propositions~\ref{norm-perturb} and \ref{norm-inverse} that we get a positive solution to (\ref{eqn}) for 
$$\epsilon < \frac{1}{2} \cdot 3^{-t} \binom{k}{t}^{-2}$$
and all sufficiently large $v$.  The discussion following Theorem 1.3 in \cite{ratdec} is missing the exponential.

Note we have given away a lot in Proposition~\ref{norm-inverse}  for a clean-looking threshold.  (Note from (\ref{intermediate2}) we use simply $\binom{k-j}{t-j}^{-1} \le 1$, so for large $k$ only the $j=t$ term of the sum is significant.)   
In the case of graphs $(t=2)$, we can compute more carefully, working from (\ref{intermediate2}), to get
$$||A^{-1}||_\infty = \left( 4-\frac{4k-2}{k(k-1)} \right) \left[ \binom{v}{k-2} + o(v^{k-2}) \right]^{-1}.$$
For triangles $(k=3)$ and large $v$, this is about $\frac{7}{3v}$. Together with $||\Delta A||_\infty < 6 \epsilon v$, this specializes to give fractional decompositions of large graphs with $\epsilon < 1/28$.  In fact, this can be further improved to $\epsilon < 1/23$ by estimating $||A^{-1} \Delta A||_\infty$ without using sub-multiplicativity; see \cite{Kseniya} for details.  

Section 5 of \cite{ratdec} (which estimates eigenvalues for the non-Hermitian $M_1$) is not needed, although it is possibly still of some interest.

The author apologizes to those inconvenienced by the error and fines himself \$100 (going to charity) for `speeding' through the argument in \cite{ratdec}.  Thanks to the authors of \cite{Deryk} for encouraging this correction and looking it over.

\end{document}